\newcommand{\comments}[1]{}
\numberwithin{equation}{section}
\def\blfootnote{\xdef\@thefnmark{}\@footnotetext}
\definecolor{orange}{rgb}{1,0.5,0}
\theoremstyle{plain}
\newtheorem{theorem}{Theorem}[section]
\newtheorem{cor}[theorem]{Corollary}
\newtheorem{lemma}[theorem]{Lemma}
\theoremstyle{definition}
\newtheorem{ex}{Example}[section]
\theoremstyle{remark}
\begin{document}

\title[Fast escaping points: a new regularity condition]{Fast escaping points of entire functions:\\
 a new regularity condition}
\author{V. Evdoridou}
\address{Department of Mathematics and Statistics\\ The Open University\\ Walton Hall\\ Milton Keynes MK7 6AA\\ United Kingdom}
\email{vasiliki.evdoridou@open.ac.uk}
\date{\today}
\begin{abstract}
Let $f$ be a transcendental entire function. The fast escaping set, $A(f)$, plays a key role in transcendental dynamics. The quite fast escaping set, $Q(f)$, defined by an apparently weaker condition is equal to $A(f)$ under certain conditions. Here we introduce $Q_2(f)$ defined by what appears to be an even weaker condition.  Using a new regularity condition we show that functions of finite order and positive lower order satisfy $Q_2(f)=A(f).$ We also show that the finite composition of such functions satisfies $Q_2(f)=A(f).$   Finally, we construct a function for which $Q_2(f) \neq Q(f)= A(f).$
\end{abstract}

\maketitle

\section{Introduction}
Let $f$ be a transcendental entire function and denote by $f^n, n=0,1,...,$ the $n$th iterate of $f$. An introduction to the theory of iteration of transcendental entire and meromorphic  functions can be found in \cite{Berg}.\\

The set $$I(f)= \{z \in \mathbb{C}:f^n(z) \to \infty\}$$ is called the \textit{escaping set} and was first studied for a general transcendental entire function by Eremenko in \cite{Ere}, where he conjectured that all the components of $I(f)$ are  unbounded. Although much progress has been made towards the conjecture, it still remains an open problem.\\

Results on Eremenko's conjecture for a general transcendental entire function have been obtained by Rippon and Stallard in \cite{Fast}, \cite{F-E} by considering a subset of the escaping set known as the \textit{fast escaping set}, $A(f),$ and showing that all the components of $A(f)$ are unbounded. This set was introduced by Bergweiler and Hinkkanen in \cite{B-H}. We will use the definition given in \cite{Fast}   according to which
$$A(f)= \{z: \;\text{there exists}\;\ell \in \mathbb{N}\;\text{such that}\; \lvert f^{n+\ell}(z) \rvert \geq M^n(R,f),\;\text{for}\;n \in \mathbb{N}\},$$
where $$M(r)= M(r,f) = \max_{\lvert z\rvert =r} \lvert f(z)\rvert, \;\;\text{for}\;\;r>0,$$ and $R>0$ is large enough to ensure that $M(r) >r$ for $r \geq R.$\\

The set $A(f)$ now plays a key role in complex dynamics (see \cite{Fast}) and so it is useful to be able to identify points that are fast escaping. In \cite[Theorem 2.7]{Fast},\linebreak it is shown that  points which eventually escape faster than the iterates of any function of the form $r \mapsto \varepsilon M(r), r>0$, where $\varepsilon \in (0,1)$,  are actually fast escaping.\\

It is natural to ask whether  $\mu_{\varepsilon}$ can be replaced in this result by a smaller function. In this context,
 Rippon and Stallard introduced the \textit{quite fast escaping set} $Q(f)$ in \cite{Regul}. Let $\mu_{\varepsilon}(r)= M(r)^{\varepsilon},$ where $r>0$ and $\varepsilon \in (0,1).$ The quite fast escaping set is defined as follows: 
 $$Q(f)= \{z:\exists\; \varepsilon \in (0,1)\;\text{and}\;\ell \in \mathbb{N} \;\text{such that}\;\lvert f^{n+\ell}(z)\rvert \geq \mu_{\varepsilon}^n(R),\;\text{for}\; n \in \mathbb{N} \},$$
 where $R>0$ is such that $\mu_{\varepsilon}(r)>r$ for $r \geq R.$ The  function $\mu_{\varepsilon}$ that is used in the definition of $Q(f)$ is smaller than the function defined by $r \mapsto \varepsilon M(r).$\\

 The set $Q(f)$ arises naturally in complex dynamics and so it is of interest to establish when  $Q(f)$ is equal to $A(f).$ Although Rippon and Stallard were the first to define $Q(f)$, points that belong to $Q(f)$ were used earlier in results concerning the Hausdorff measure and Hausdorff dimension of the escaping set and the Julia set of some classes of functions (see \cite{B-K-S} and \cite{Peter}). Rippon and Stallard showed that   $Q(f)=A(f)$ for many classes of functions, but they also constructed examples where $Q(f) \neq A(f).$ One well studied class of functions for which $Q(f)=A(f)$  is the Eremenko-Lyubich class $\mathcal{B}$ which consists of the functions for which the set of singularities of the inverse function, $f^{-1}$, is bounded (see \cite{E-L}).\\
 
 The following family of functions $\mu_{m,\varepsilon}$ is a natural generalisation of the function $\mu_{\varepsilon}$ defined by $\mu_{\varepsilon}(r)=M(r)^{\varepsilon}$:
$$\log^m \mu_{m,\varepsilon}(r)= \varepsilon \log^m M(r),\;\;m \in \mathbb{N},\;\;\varepsilon \in (0,1),$$
whenever $\mu_{m,\varepsilon}(r)$ is defined.
Note that the function $\mu_{\varepsilon}$ used in the definition of $Q(f)$ is equal to $\mu_{1,\varepsilon}$.\\

In this paper we focus on the case $m=2$, that is, we consider 
 \begin{equation}
 \label{defmu}
\mu_{2,\varepsilon}(r)= \exp((\log M(r))^{\varepsilon}),\;\;0<\varepsilon<1,
\end{equation}
 and we set
 \begin{equation}
\label{q2def} 
  Q_2(f)= \{z: \exists\; \varepsilon \in (0,1)\;\text{and}\;\ell \in \mathbb{N} \;\text{such that}\;\lvert f^{n+\ell}(z)\rvert \geq \mu_{2,\varepsilon}^n(R),\;\text{for}\; n \in \mathbb{N} \},
 \end{equation}
 where $R>0$ is such that $\mu_{2,\varepsilon}(r)>r$ for $r\geq R.$ Note that $Q_2(f)$ is independent of~$R$.\\
 
 For $0<\varepsilon<1$ we have $\mu_{2,\varepsilon}(r)< \mu_{\varepsilon}(r),$ for sufficiently large $r$, so
 $$A(f)\subset Q(f) \subset Q_2(f).$$
 
Unlike the functions $\mu_{\varepsilon}$ that were introduced in earlier papers, for $\mu_{2,\varepsilon}$ we do  not know a priori that, for any given transcendental entire function, $\mu_{2,\varepsilon}(r)>r$ for $r$ large enough. This means that, for some slowly growing functions $f$, there exist points in $Q_2(f)$ that are not even escaping. However, $Q_2(f) \subset I(f)$ for a large class of functions. We seek to identify functions for which  $Q_2(f)=A(f).$\\

Recall that the order $\rho (f)$ and lower order $\lambda (f)$ of $f$ are defined by
$$\rho (f)=\limsup_{r \to \infty} \frac{\log \log M(r)}{\log r},\;\;\; \lambda(f)= \liminf_{r \to \infty} \frac{\log \log M(r)}{\log r}.$$
We prove the following result:
\begin{theorem}
\label{thintro}
Let $f= f_1 \circ f_2 \circ \cdots \circ f_j$ be a finite composition of transcendental entire functions, where $f_1$ has finite order and positive lower order. Then $Q_2(f)=~A(f)$.
\end{theorem}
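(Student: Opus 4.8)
Since the inclusion $A(f)\subset Q_2(f)$ has already been noted, it suffices to prove $Q_2(f)\subset A(f)$. Put $g=f_2\circ\cdots\circ f_j$, a transcendental entire function (with $g=\mathrm{id}$ when $j=1$), so that $f=f_1\circ g$ with $f_1$ of finite order $\rho_1$ and positive lower order $\lambda_1$. The whole proof rests on one \emph{regularity estimate}: for each $\varepsilon\in(0,1)$ there is $R_1>0$, large enough that $\mu_{2,\varepsilon}(r)>r$ and $M(r)>r$ for $r\geq R_1$, with
\[
\mu_{2,\varepsilon}^{\,n+1}(R_1)\ \geq\ M^{n}(R_1)\qquad\text{for all }n\geq0.
\]
Granted this, the theorem follows at once: if $z\in Q_2(f)$ then, since $Q_2(f)$ does not depend on $R$, there is $\ell\in\N$ with $|f^{n+\ell}(z)|\geq\mu_{2,\varepsilon}^{\,n}(R_1)$ for all $n$, whence $|f^{\,m+\ell+1}(z)|\geq M^{m}(R_1)$ for all $m$, so $z\in A(f)$.

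To prove the regularity estimate, fix $\delta\in(0,\lambda_1)$, so that $s^{\lambda_1-\delta}\leq\log M(s,f_1)\leq s^{\rho_1+\delta}$ for all large $s$, and set $\psi(t)=\log M(e^{t},g)$; by the Hadamard three--circles theorem $\psi$ is \emph{convex} and increasing. From the trivial bound $M(r,f)\leq M\big(M(r,g),f_1\big)$ together with the standard composition lower bound — there are $c\in(0,1)$ and $R_0$ with $M(r,f)\geq M\big(M(cr,g),f_1\big)$ for $r\geq R_0$ \cite{Fast} — one gets, for $t$ large,
\[
(\lambda_1-\delta)\,\psi\big(t-|\log c|\big)\ \leq\ \log\log M(e^{t},f)\ \leq\ (\rho_1+\delta)\,\psi(t).
\]
Thus the growth of $f$, measured on the doubly logarithmic scale, is pinched between two fixed multiples of the convex function $\psi$; this is exactly where the finite order and positive lower order of $f_1$ enter. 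Now write $\nu_n=\mu_{2,\varepsilon}^{\,n}(R_1)$, $\sigma_n=M^{n}(R_1)$, $p_n=\log\nu_n$, $q_n=\log\sigma_n$, so that $p_{n+1}=(\log M(\nu_n,f))^\varepsilon$ and $q_{n+1}=\log M(\sigma_n,f)$.

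The crucial point is the \emph{sharp} shift in the estimate (index $n+1$ against $n$, not $n+k$ with $k$ growing), and I would obtain it by proving, by induction on $n$, that $p_{n+1}\geq q_n^{\,K}$ for a fixed integer $K\geq2$. A naive hypothesis $p_{n+1}\geq q_n$ does not propagate, because one application of $r\mapsto\log M(r,f)$ beats one application of $r\mapsto(\log M(r,f))^\varepsilon$ — the latter carries a power $\varepsilon<1$ in the exponent — and only the slack stored in $q_n^{K}$ can absorb this. The base case $p_1\geq q_0^{K}$ is a direct estimate once $R_1$ is large. For the inductive step, using $p_{n+1}\geq q_n^{K}$, the monotonicity of $\log M(\cdot,f)$, and the two displayed bounds, the inequality $p_{n+2}\geq q_{n+1}^{K}$ reduces to one of the form
\[
(\lambda_1-\delta)\,\psi\big(q_n^{K}-|\log c|\big)\ \geq\ \tfrac{K}{\varepsilon}\,(\rho_1+\delta)\,\psi(q_n).
\]
Convexity of $\psi$ gives $\psi(q_n^{K}/2)\geq\tfrac14 q_n^{K-1}\psi(q_n)$ once $q_n$ is large, so this holds provided $q_n$ exceeds a fixed threshold depending only on $\varepsilon$, $\rho_1$, $\lambda_1$; since $(q_n)$ is increasing it is enough that $\log R_1$ exceed that threshold, so no thresholds accumulate over $n$. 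Taking $K=2$ yields $\nu_{n+1}\geq\sigma_n$, the required estimate. (When $j=1$ one has $\psi(t)=t$ and this convexity input is trivial.)

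I expect two points to need genuine care. First, recognising that the ``one step behind'' comparison can be pushed through only with the strengthened hypothesis $p_{n+1}\geq q_n^{K}$: a naive induction stalls, and the whole argument hinges on this device together with convexity of $\psi$. Second, the composition lower bound $M(r,f)\geq M\big(M(cr,g),f_1\big)$: unlike the trivial upper bound, this is the real analytic input in the composite case (a Wiman--Valiron type covering statement for $g$), and one must keep track that its threshold $R_0$ and the other largeness requirements are fixed, so that the estimates apply at every point of the increasing orbits $(\nu_n)$ and $(\sigma_n)$ once $R_1$ has been chosen large.
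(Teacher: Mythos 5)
Your proposal is correct, and it reaches the conclusion by a genuinely different organization of the same analytic ingredients. The paper modularizes the argument into three separate statements: it introduces the pointwise regularity condition $\log M(r^k)\geq (k\log M(r))^{1/\varepsilon}$ (``strong log-regularity''), shows this condition forces $Q_2(f)=A(f)$ by iterating $\mu_{2,\varepsilon}(r^k)\geq M(r)^k$, verifies it for functions of finite order and positive lower order via the sandwich $e^{r^q}\leq M(r)\leq e^{r^p}$, and finally proves it is inherited by $f_1\circ f_2\circ\cdots\circ f_j$ using Sixsmith's composition lemma together with $\log M(r^c)\geq c\log M(r)$. You instead prove the orbit comparison $\mu_{2,\varepsilon}^{\,n+1}(R_1)\geq M^n(R_1)$ directly, by induction with the strengthened hypothesis $\log\mu_{2,\varepsilon}^{\,n+1}(R_1)\geq(\log M^n(R_1))^K$; the inputs are the same composition lemma and the same order/lower-order sandwich for $f_1$, with Hadamard convexity of $\psi(t)=\log M(e^t,g)$ playing the role of the paper's lemma $\log M(r^c)\geq c\log M(r)$ (which is itself a consequence of that convexity). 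Your strengthened hypothesis is in effect a dynamical reformulation of the paper's pointwise condition --- both encode the fact that the $\mu_{2,\varepsilon}$-orbit must stay a fixed power ahead of the $M$-orbit for the comparison to propagate, and your observation that a naive one-step comparison stalls is exactly why the paper needs the exponent $1/\varepsilon$ rather than a constant in its condition. What the paper's route buys is reusability: the isolated regularity condition applies beyond the order hypotheses (e.g.\ to the zero lower order function of Example 4.1), and the composition step becomes a clean standalone theorem. What your route buys is a self-contained proof with an explicit, minimal head start of one extra $\mu_{2,\varepsilon}$-iterate. I verified your inductive step ($K=2$ suffices, and the thresholds depend only on $\varepsilon$, $\rho_1$, $\lambda_1$, $c$ and $g$, not on $n$), so the sketch closes.
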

An immediate consequence of Theorem \ref{thintro} is the following:
\begin{cor}
\label{corintro}
We have $Q_2(f)=A(f)$ whenever $f= f_1 \circ f_2 \circ \cdots \circ f_j$ is a finite composition of transcendental entire functions and $f_1$  satisfies one of the following:\\
\begin{itemize}
\item[(a)] there exist $A, B, C, r_0>1$ such that
$$A\log M(r,f_1) \leq \log M(Cr,f_1) \leq B\log M(r,f_1),\;\;\text{for}\;\;r \geq r_0;$$
\item[(b)] $f_1 \in \mathcal{B}$ and is of finite order.\\
\end{itemize}
\end{cor}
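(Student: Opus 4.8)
The plan is to derive both statements directly from Theorem \ref{thintro} by showing that, in each case, the hypothesis forces $f_1$ to have finite order and positive lower order. For part (a), the upper inequality $\log M(Cr,f_1)\le B\log M(r,f_1)$ is a classical regularity condition that bounds the order: iterating it along the geometric sequence $r_0, Cr_0, C^2r_0,\dots$ gives $\log M(C^nr_0,f_1)\le B^n\log M(r_0,f_1)$, and since $\log(C^nr_0)\sim n\log C$, one reads off $\rho(f_1)\le \log B/\log C<\infty$. Symmetrically, the lower inequality $A\log M(r,f_1)\le \log M(Cr,f_1)$ iterates to $\log M(C^nr_0,f_1)\ge A^n\log M(r_0,f_1)$, and taking the liminf of $\log\log M(r)/\log r$ along this sequence (and interpolating using monotonicity of $M$ between consecutive terms, which only changes the estimate by a bounded multiplicative factor in $\log r$) yields $\lambda(f_1)\ge \log A/\log C>0$ since $A>1$. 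Hence $f_1$ has finite order and positive lower order, and Theorem \ref{thintro} applies.

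For part (b), the finite order hypothesis directly gives $\rho(f_1)<\infty$, so the only thing to check is that membership in the Eremenko–Lyubich class $\mathcal B$ forces positive lower order. This is where I would invoke the known growth estimate for class $\mathcal B$ functions: there is a classical result (essentially due to Eremenko–Lyubich, and recorded in several places such as the work of Rippon–Stallard on $\mathcal B$) stating that if $f_1\in\mathcal B$ then $\log M(r,f_1)/\log r\to\infty$, and more is true — for $f_1\in\mathcal B$ one has a lower bound of the form $\log\log M(r,f_1)\ge c\log r$ for some $c>0$ and all large $r$. Concretely, one can use the fact that for $f_1\in\mathcal B$, on a suitable annulus the function behaves like an exponential of something growing at least like a power of $r$: the standard logarithmic change of variable shows $f_1$ covers arbitrarily large annuli with bounded distortion, forcing $\log\log M(r,f_1)\gtrsim \log r$. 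Thus $\lambda(f_1)>0$, and again Theorem \ref{thintro} applies.

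The main obstacle is locating and correctly citing the precise quantitative lower bound on $\log\log M(r,f_1)$ for $f_1\in\mathcal B$ that delivers $\lambda(f_1)>0$; the weak statement $\log M(r,f_1)/\log r\to\infty$ alone only gives $\lambda(f_1)\ge 1$ in terms of a different exponent and is in fact not quite the right normalization — what is needed is that $\log M(r,f_1)$ grows at least like a positive power of $r$, equivalently $\liminf \log\log M(r)/\log r>0$. I would handle this by citing the relevant lemma (the growth of functions in $\mathcal B$ is treated in, e.g., \cite{E-L} and in the Rippon–Stallard papers \cite{Fast}, \cite{Regul}) rather than reproving it. Once that citation is in place, both parts are immediate corollaries, the verification in part (a) being entirely elementary as sketched above.
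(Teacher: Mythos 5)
Your proposal is correct and follows the same overall route as the paper: reduce both cases to Theorem \ref{thintro} by checking that the hypotheses force $f_1$ to have finite order and positive lower order. The only differences are in how the two verifications are discharged. For (a) you give a self-contained elementary iteration argument yielding $\lambda(f_1)\ge \log A/\log C>0$ and $\rho(f_1)\le \log B/\log C<\infty$, which is sound and slightly more than the paper does --- the paper simply cites Bergweiler and Karpi\'nska \cite{Haus}, where exactly this is proved. For (b) the quantitative fact you were looking for is precisely \cite[Lemma 3.5]{Dimen}: every function in the class $\mathcal{B}$ has lower order at least $1/2$; with that citation in place (as the paper does) your argument is complete, and your heuristic via the logarithmic change of variable is indeed the idea behind that lemma.
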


 In fact, functions of type (a) were studied by Bergweiler and Karpi\'{n}ska in \cite{Haus} where it was shown that they are of finite order and positive lower order. All functions in class $\mathcal{B}$ have lower order not less than $1/2$ (see \cite[Lemma 3.5]{Dimen}) and finite compositions of such functions of finite order were considered by  Rottenfusser, R\"uckert, Rempe and Schleicher in \cite{3R}.\\
 
The proof of Theorem \ref{thintro} is in three steps. We first introduce a new regularity condition as follows:\\

A transcendental entire function $f$ is \textit{strongly log-regular} if, for any $\varepsilon \in (0,1)$, there exist $R>0$ and $k>1$ such that 
\begin{equation}
\label{nrc}
\log M(r^k) \geq (k\log M(r))^{1/\varepsilon},\;\;\text{for}\;\;r>R.
\end{equation} 

Using (\ref{defmu}) we see that for each $\varepsilon \in (0,1),$  (\ref{nrc}) is equivalent to 
\begin{equation}
\label{slogm}
\mu_{2,\varepsilon}(r^k) \geq M(r)^k,\;\;\text{for}\;\;r>R,
\end{equation}
which implies that
\begin{equation}
\label{logm}
\mu_{1,\varepsilon}(r^k) \geq M(r)^k,\;\;\text{for large}\;\;r,
\end{equation}
or equivalently, there exist $R_0>0,  k, d >1$ such that
\begin{equation}
\label{logreg2}
M(r^k) \geq M(r)^{kd},\;\;\text{for}\;\;r >R_0.
\end{equation}
 The latter condition is equivalent to the condition called \textit{log-regularity} that was used in \cite{Regul} as a sufficient condition for $Q(f)=A(f).$ The name strong log-regularity arises from the fact that strong log-regularity implies log-regularity.\\
 
It seems natural to generalise (\ref{q2def}) and  (\ref{slogm}) for any $m \in \mathbb{N}$ as follows:\\
Let
$$Q_m(f)= \{z: \exists\;\varepsilon \in (0,1), \ell \in \mathbb{N}\;\; \text{such that}\;\; \lvert f^{n+ \ell}(z) \rvert \geq \mu_{m,\varepsilon}^n(R),\; \text{for}\;\; n\in \mathbb{N}\},$$
where $R>0$ is such that $\mu_{m,\varepsilon}(r)> r$ for $r \geq R$. If $\mu_{m,\varepsilon}$ satisfies the generalised (\ref{slogm}), that is, for any $\varepsilon \in (0,1)$ and any $m \in \mathbb{N},$ there exist $R>0$ and $k>1$ such that
\begin{equation}
\label{slrgen}
\mu_{m,\varepsilon}(r^k) \geq M(r)^k,\;\;\text{for}\;\;r>R,
\end{equation} 
then we can show that $Q_m(f)=A(f)$. However, the larger $m$ is, the more difficult it is for $f$ to satisfy (\ref{slrgen}). For example, it is not hard to check that, for $m=3$, $f(z)=e^z$ does not satisfy (\ref{slrgen}). In forthcoming work we give alternative, but more complicated, regularity conditions which, for any $m \geq 2$, guarantee that $Q_m(f)=A(f)$ for a wide range of functions $f$. \\

In Section 2, we give the first two steps of the proof of Theorem \ref{thintro}.
In the first step, we show that strong log-regularity is a sufficient condition for $Q_2(f)=A(f).$ In the second step, we prove that any transcendental entire function of finite order and positive lower order is strongly log-regular.\\

The last step of the proof is given in Section 3 where we show that strong log-regularity is preserved under finite composition of transcendental entire functions where the first function of the composition is strongly log-regular.\\

Finally, in the last section, we construct two functions. The first is  an example of a strongly log-regular function with zero lower order and positive, finite order and the second is a function for which  $Q_2(f) \neq A(f)$ whereas $Q(f)=A(f).$\\

\textit{Acknowledgment.} I would like to thank my supervisors Prof. Phil Rippon and Prof. Gwyneth Stallard for their patient guidance and all their help with this paper.

\section{Sufficient conditions for $Q_2(f)=A(f)$}
\label{strong-log}

In this section we give the first two steps of the proof of Theorem \ref{thintro}. 
We first prove the following result about strongly log-regular functions:
\begin{theorem}
\label{strongQA}
Let $f$ be a transcendental entire function which is strongly log-regular. Then $Q_2(f)=A(f).$
\end{theorem}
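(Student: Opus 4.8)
The plan is to prove the two inclusions $A(f) \subseteq Q_2(f)$ and $Q_2(f) \subseteq A(f)$ separately, the first being immediate. Since $\mu_{2,\varepsilon}(r) < \mu_\varepsilon(r)$ for large $r$, the chain $A(f) \subseteq Q(f) \subseteq Q_2(f)$ noted in the introduction already gives $A(f) \subseteq Q_2(f)$ without any regularity hypothesis, so the whole content of the theorem lies in the reverse inclusion $Q_2(f) \subseteq A(f)$.

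So let $z \in Q_2(f)$: there exist $\varepsilon \in (0,1)$ and $\ell \in \mathbb{N}$ with $|f^{n+\ell}(z)| \geq \mu_{2,\varepsilon}^n(R)$ for all $n \in \mathbb{N}$, where $R$ is chosen large enough that $\mu_{2,\varepsilon}(r) > r$ for $r \geq R$. To show $z \in A(f)$ it suffices to find some $R' \geq R$ and some shift $\ell'$ with $|f^{n+\ell'}(z)| \geq M^n(R', f)$ for all $n$. The natural strategy is to compare the iterates of $\mu_{2,\varepsilon}$ with the iterates of $r \mapsto M(r)$ directly. First I would invoke strong log-regularity for this particular $\varepsilon$ to get $R_1 > 0$ and $k > 1$ with $\mu_{2,\varepsilon}(r^k) \geq M(r)^k$ for $r > R_1$ (this is exactly the reformulation \eqref{slogm} of the defining inequality \eqref{nrc}). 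The key point is that this inequality lets one push a lower bound of the shape $|w| \geq r^k$ forward under one application of $\mu_{2,\varepsilon}$ to a lower bound of the shape $\mu_{2,\varepsilon}(|w|) \geq \mu_{2,\varepsilon}(r^k) \geq M(r)^k$, i.e.\ it converts "$k$-th power of $r$" into "$k$-th power of $M(r)$", which is precisely the recursive step one needs to iterate.

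Concretely, I would fix a starting radius $R_2 \geq \max(R, R_1)$ large enough that $R_2 \geq R_2^{1/k}$-type bookkeeping works and that $\mu_{2,\varepsilon}^\ell(R) \geq R_2^k$ (possible by enlarging $\ell$ via monotonicity, or by passing to a tail of the orbit since $z \in Q_2(f)$ and $\mu_{2,\varepsilon}^n(R) \to \infty$ under the given hypotheses). Then set $r_0 := R_2$ and argue by induction that $|f^{\ell + n}(z)| \geq r_0^k$ implies, after using the $Q_2$-bound to advance one step and strong log-regularity to re-express it, that $|f^{\ell+n+1}(z)| \geq M(r_0)^k \geq \ldots$; carrying this through, one obtains $|f^{\ell + n}(z)| \geq (M^n(r_0))^k \geq M^n(r_0) \geq M^n(R', f)$ for a suitable $R' \geq r_0$, which is the definition of membership in $A(f)$. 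The bookkeeping of the shift and of how the exponent $k$ interacts with the iteration of $M$ needs a little care — one wants $M^n(r_0^k) \geq (M^n(r_0))^k$ or a comparable statement, which follows from monotonicity of $M$ and the fact that $M(r)^k \leq M(r^k)$ for large $r$ (itself a consequence of, e.g., \eqref{logreg2}, or can be arranged directly) — but it is standard.

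The main obstacle I anticipate is not any single estimate but the interface between the two regularity formulations: showing cleanly that \eqref{nrc}/\eqref{slogm} genuinely iterates, i.e.\ that one application gives $\mu_{2,\varepsilon}(r^k) \geq M(r)^k$ and that one can then reapply the hypothesis to $M(r)$ in place of $r$ (legitimate since $M(r) \to \infty$, so eventually $M(r) > R_1$) to keep the induction going, all while tracking that the radii stay in the region where $\mu_{2,\varepsilon}(r) > r$ and where \eqref{nrc} is valid. A secondary subtlety is the remark in the introduction that for slowly growing $f$ there may be points of $Q_2(f)$ that are not even escaping; one should check that strong log-regularity rules this out, i.e.\ that it forces $\mu_{2,\varepsilon}(r) > r$ for large $r$ (which it does, since \eqref{nrc} with the trivial bound $M(r) > r$ gives $\log M(r^k) \geq (k \log M(r))^{1/\varepsilon} \to \infty$ fast enough), so that the definition of $Q_2(f)$ is non-degenerate for such $f$ and the comparison with $A(f)$ makes sense.
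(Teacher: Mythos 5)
Your proposal is correct and follows essentially the same route as the paper: both reduce the theorem to the inclusion $Q_2(f)\subset A(f)$, iterate the reformulation \eqref{slogm} of strong log-regularity to get $\mu_{2,\varepsilon}^{n}(r^k)\geq (M^{n}(r))^k\geq M^n(r)$, and then absorb the discrepancy between $R$ and $R^{1/k}$ into a finite shift of the orbit. The only cosmetic difference is that the paper iterates the inequality $\mu_{2,\varepsilon}(r^k)\geq M(r)^k$ directly (applying $\mu_{2,\varepsilon}$ to both sides each time), so it never needs the auxiliary comparison $M^n(r^k)\geq (M^n(r))^k$ that you mention.
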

\begin{proof}
Clearly $A(f) \subset Q_2(f)$, as noted earlier. Suppose now that $z \in Q_2(f).$ Then (\ref{q2def}) implies that there exist $\varepsilon \in (0,1)$ and $\ell \in \mathbb{N}$ such that  
\begin{equation}
\label{q2}
\lvert f^{n+\ell}(z) \rvert \geq \mu_{2,\varepsilon}^n(R),\;\;\text{for}\;\;n \in \mathbb{N},
\end{equation}
where $R>0$ is such that $\mu_{2,\varepsilon}(r)> r$ for $r \geq R$.
As $f$ is strongly log-regular it satisfies (\ref{slogm}) and so there exist $R_0>R$ and $k>1$ such that, for $r>R_0,$
 \begin{equation}
\label{eth}
\mu_{2,\varepsilon}(r^k) \geq  M(r)^k,\;\;\text{for}\;\;r>R_0.
\end{equation}
By applying (\ref{eth}) twice we obtain $$\mu_{2,\varepsilon}^2(r^k)= \mu_{2,\varepsilon}(\mu_{2,\varepsilon}(r^k)) \geq \mu_{2,\varepsilon} ((M(r))^k) \geq (M(M(r)))^k,\;\;\text{for}\;\;r>R_0$$ since $\mu_{2,\varepsilon}(r^k)>R.$ By applying (\ref{eth}) repeatedly in this way we obtain that
\begin{equation}
\label{q2.2}
\mu_{2,\varepsilon}^{n}(r^k) \geq (M^{n}(r))^k \geq M^{n}(r),\;\;\text{for}\;\;r>R_0.
\end{equation}

But $M^n(r) \to \infty$ as $n \to \infty$ for $r \geq R$ and so there exists $n_0 \in \mathbb{N}$ such that $M^{n_0}(R) \geq R^k$ and hence, (\ref{q2}) and (\ref{q2.2}) imply that
$$\lvert f^{n+ n_0+\ell}(z) \rvert \geq \mu_{2,\varepsilon}^{n+n_0}(R) \geq M^{n+n_0}(R^{1/k}) \geq M^n(R),$$
and the result follows.
\end{proof}

We now show that all functions of finite order and positive lower order are strongly log-regular.

\begin{theorem}
\label{th3}
\label{mt}
Let $f$ be a transcendental entire function of finite order and positive lower order. Then $f$ is strongly log-regular and hence $Q_2(f)=A(f).$
\end{theorem}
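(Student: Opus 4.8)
The plan is to reduce everything to Theorem~\ref{strongQA}: once we know that such an $f$ is strongly log-regular, the equality $Q_2(f)=A(f)$ follows at once. So the task is to verify the regularity condition \eqref{nrc} for every $\varepsilon\in(0,1)$, using only the two hypotheses $\rho(f)<\infty$ and $\lambda(f)>0$. Write $L(r)=\log M(r,f)$, set $\rho=\rho(f)$, $\lambda=\lambda(f)$, and fix $\varepsilon\in(0,1)$.

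First I would pick $\delta>0$ small enough that $\lambda-\delta>0$. By the definitions of order and lower order there is $R_1\ge 1$ such that
\[
r^{\lambda-\delta}\le L(r)\le r^{\rho+\delta}\qquad\text{for all }r\ge R_1 ,
\]
the left inequality coming from $\lambda=\liminf_{r\to\infty}\log L(r)/\log r$ and the right one from $\rho=\limsup_{r\to\infty}\log L(r)/\log r$. Next, choose $k>1$ large, specifically $k>(\rho+\delta)/(\varepsilon(\lambda-\delta))$. Then for $r\ge R_1$ (so that also $r^k\ge R_1$) the lower bound applied at $r^k$ and the upper bound applied at $r$ give
\[
L(r^k)\ge r^{k(\lambda-\delta)},\qquad \bigl(k L(r)\bigr)^{1/\varepsilon}\le k^{1/\varepsilon}\,r^{(\rho+\delta)/\varepsilon}.
\]
Since $k(\lambda-\delta)>(\rho+\delta)/\varepsilon$, the quotient $k^{-1/\varepsilon}\,r^{\,k(\lambda-\delta)-(\rho+\delta)/\varepsilon}$ tends to $\infty$ with $r$, so there is $R\ge R_1$ beyond which it exceeds $1$; for such $r$ the inequality \eqref{nrc} holds. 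Hence $f$ is strongly log-regular, and Theorem~\ref{strongQA} yields $Q_2(f)=A(f)$.

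The argument is elementary and I expect no serious obstacle; the only point needing care is the order of the quantifiers. The exponent $k$ is forced to grow as $\varepsilon\to 0$ (and as the ratio $\rho/\lambda$ grows), but the definition of strong log-regularity allows $R$ and $k$ to depend on $\varepsilon$, so this is harmless. The argument also makes transparent where each hypothesis is used: finiteness of the order supplies the upper bound $L(r)\le r^{\rho+\delta}$ controlling the right-hand side of \eqref{nrc}, while positivity of the lower order supplies the power-law lower bound $L(r^k)\ge r^{k(\lambda-\delta)}$ that makes the left-hand side large. If $\lambda=0$ the choice of $\delta$ collapses and the scheme breaks down, which is consistent with the zero-lower-order strongly log-regular example announced for the final section.
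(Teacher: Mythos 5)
Your proposal is correct and follows essentially the same route as the paper: both arguments sandwich $\log M(r)$ between powers $r^{q}$ and $r^{p}$ (your $q=\lambda-\delta$, $p=\rho+\delta$), choose $k>p/(q\varepsilon)$, and compare the resulting power-law bounds to verify \eqref{nrc}. No issues.
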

\begin{proof}
Let $f$ be a transcendental entire function of finite order and positive lower order. Then there exist $0<q<p$ such that
\begin{equation}
\label{th3.1}
e^{r^q} \leq M(r) \leq e^{r^p},\;\text{for sufficiently large}\;r,
\end{equation}
or equivalently
$$r^q \leq \log M(r) \leq r^p.$$
So, for each $\varepsilon \in (0,1)$ and sufficiently large $r$,
\begin{equation}
\label{th3.2}
(\log M(r^k))^{\varepsilon} \geq (r^{qk})^{\varepsilon}= r^{\varepsilon qk}.
 \end{equation}
It follows from (\ref{th3.1}) that, for $k> p/(q\varepsilon),$ there exists $R>0$ such that, for $r>R$,
\begin{equation}
\label{th3.3}
r^{\varepsilon qk} \geq kr^p \geq k\log M(r), 
\end{equation}
so (\ref{nrc}) is satisfied, by (\ref{th3.2}) and (\ref{th3.3}).
\end{proof}

\section{Composition and strong log-regularity}
\label{compose}
In this section we complete the proof of Theorem \ref{thintro} by showing that the finite composition of transcendental entire functions, where the first function of the composition is strongly log-regular, is a strongly log-regular function.

\begin{theorem}
\label{thcom}
Let  $f_1,f_2,...,f_j$ be transcendental entire functions and suppose $f_1$ is strongly log-regular. Then $g= f_1 \circ f_2 \circ...\circ f_j$ is strongly log-regular.
\end{theorem}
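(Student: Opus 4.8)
The plan is to reduce strong log-regularity of $g=f_1\circ h$, where $h:=f_2\circ\cdots\circ f_j$, to that of the outer function $f_1$. If $j=1$ there is nothing to prove, so assume $j\ge2$; then $h$ is again a transcendental entire function, since a composition of transcendental entire functions is transcendental entire. I would first record three facts. (i) By the maximum principle, $\log M(r,g)\le\log M(M(r,h),f_1)$ for all $r>0$. (ii) A P\'olya-type estimate for compositions gives $\log M(r,g)\ge\log M(M(r^{1/2},h),f_1)$ for all large $r$; here I would start from the standard inequality $M(r,f_1\circ h)\ge M(cM(r/2,h),f_1)$, valid for large $r$ with a fixed $c>0$, and use that, $h$ being transcendental, $cM(r/2,h)\ge M(r^{1/2},h)$ eventually. (iii) Since $\log M(r,h)$ is a convex function of $\log r$ (Hadamard's three-circles theorem) and tends to infinity, a short chord estimate shows that for every $C>0$ there is an integer $N$ with $M(r^{N},h)\ge M(r,h)^{C}$ for all large $r$.

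With these in hand, fix $\varepsilon\in(0,1)$; the goal is to produce $R>0$ and $k>1$ for which (\ref{nrc}) holds for $g$. The decisive idea is to apply the strong log-regularity of $f_1$ not at $\varepsilon$ but at the strictly smaller exponent $\varepsilon':=\varepsilon/2$, obtaining $R_1>0$ and $k'>1$ with $\log M(\rho^{k'},f_1)\ge(k'\log M(\rho,f_1))^{1/\varepsilon'}$ for $\rho>R_1$. Using (iii) with $C=k'$, I would pick $N$ so that $M(r^{N},h)\ge M(r,h)^{k'}$ for large $r$, and set $k:=2N$, so that $(r^{k})^{1/2}=r^{N}$. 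Then, writing $X:=\log M(M(r,h),f_1)$, for all large $r$ the estimates chain as
\[
\log M(r^{k},g)\ge\log M\bigl(M(r^{N},h),f_1\bigr)\ge\log M\bigl(M(r,h)^{k'},f_1\bigr)\ge(k'X)^{1/\varepsilon'}\ge(kX)^{1/\varepsilon}\ge\bigl(k\log M(r,g)\bigr)^{1/\varepsilon},
\]
using in turn (ii), then (iii) together with the monotonicity of $M(\cdot,f_1)$, then strong log-regularity of $f_1$ at $\varepsilon'$, then the inequality $(k'X)^{1/\varepsilon'}\ge(kX)^{1/\varepsilon}$, and finally (i). As $\varepsilon\in(0,1)$ was arbitrary, $g$ is strongly log-regular.

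The only nontrivial link in this chain is $(k'X)^{1/\varepsilon'}\ge(kX)^{1/\varepsilon}$, and this is exactly the point where the choice $\varepsilon'<\varepsilon$ pays off: since $1/\varepsilon'-1/\varepsilon>0$ and $X=\log M(M(r,h),f_1)\to\infty$ as $r\to\infty$ (because $f_1$ is transcendental and $M(r,h)\to\infty$), the factor $X^{1/\varepsilon'-1/\varepsilon}$ eventually swamps the fixed constants $k$ and $k'$. This is, I expect, the main obstacle to be faced: condition (\ref{nrc}) forces the same exponent $k$ on both occurrences of $\log M$, whereas passing from $f_1$ to $f_1\circ h$ unavoidably loses a power of the argument (the gap between the upper bound $M(M(r,h),f_1)$ of (i) and the lower bound $M(M(r^{1/2},h),f_1)$ of (ii)), so a direct transfer at the same $\varepsilon$ would fail; running the regularity of $f_1$ at a smaller exponent manufactures precisely the slack needed to absorb that loss. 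The one ingredient imported from outside is the P\'olya-type composition estimate used in (ii); everything else is elementary manipulation of the maximum modulus and of convex functions.
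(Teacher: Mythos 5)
Your proposal is correct and follows essentially the same strategy as the paper: reduce to the outer function $f_1$, invoke its strong log-regularity at a strictly smaller exponent so that the resulting slack absorbs the loss incurred when comparing $M(r,f_1\circ h)$ with $M(M(r,h),f_1)$, using a P\'olya-type composition estimate (the paper cites Sixsmith's version) together with the convexity bound $\log M(r^c)\ge c\log M(r)$. The only differences are bookkeeping: the paper loses a multiplicative factor $\nu=k^{1/2}$ in the radius and absorbs it via $\varepsilon=\tfrac23\varepsilon'$ and $(\log M)^{3/2}\ge\log M$, whereas you lose a square root of the radius and absorb it via $\varepsilon'=\varepsilon/2$ and the divergence of $X$.
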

Theorem \ref{thcom} implies that if $f$ is strongly log-regular then the $n$-th iterate $f^n$ is  strongly log-regular as well.\\

In order to prove the theorem we need the following lemma of Rippon and Stallard \cite[Lemma 2.2]{Smallg}.\\ 

\begin{lemma}

Let $f$ be a transcendental entire function. Then there exists $R_0>0$ such that, for all $r \geq R_0$ and all $c>1$,
\begin{equation}
\label{max-prop}
\log M(r^c) \geq c \log M(r).
\end{equation}
\end{lemma}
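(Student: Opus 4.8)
The plan is to derive this from the convexity of $r\mapsto\log M(r)$ as a function of $\log r$ (Hadamard's three-circles theorem) together with the fact that a transcendental entire function outgrows every polynomial. The two ingredients are: (i) for $0<r_1<r_2<r_3$,
\[
\log M(r_2)\,\log(r_3/r_1)\ \le\ \log M(r_1)\,\log(r_3/r_2)+\log M(r_3)\,\log(r_2/r_1);
\]
(ii) since $f$ is transcendental it is not a polynomial, so by Cauchy's estimates $M(r)/r^n\to\infty$ for every $n\in\mathbb{N}$, and hence $\log M(r)/\log r\to\infty$ as $r\to\infty$.

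First I would fix a constant $s>1$ (say $s=e$) and use (ii) to choose $R_0>s$ such that $\log M(r)/\log r\ge \log M(s)/\log s$ for all $r\ge R_0$. Then, given $r\ge R_0$ and $c>1$, I would apply the three-circles inequality to the three radii $s<r<r^{c}$; writing $u=\log r$ and $v=\log s$ (so $u>v>0$) and solving for $\log M(r^{c})$, a short rearrangement gives
\[
\log M(r^{c})\ \ge\ c\log M(r)+\frac{c-1}{u-v}\bigl(v\log M(r)-u\log M(s)\bigr).
\]
By the choice of $R_0$ the bracket $v\log M(r)-u\log M(s)$ is nonnegative (this is exactly the inequality $\log M(r)/\log r\ge\log M(s)/\log s$ after dividing by $uv>0$), and since $c>1$ and $u>v$ the whole correction term is $\ge 0$; thus $\log M(r^{c})\ge c\log M(r)$, uniformly in $c>1$, which is the claim for this $R_0$.

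The only genuine computation is the rearrangement above, and the step to watch is recognising that the error term coming out of three-circles is controlled exactly by $\log M(r)/\log r-\log M(s)/\log s$, which is eventually nonnegative once $s$ is fixed — and this is precisely where transcendentality enters, via (ii). (For a polynomial of degree $n$ with $|a_n|>1$ one has $\log M(r^{c})-c\log M(r)\to(1-c)\log|a_n|<0$, so the lemma genuinely fails without the growth hypothesis.) An alternative, essentially equivalent route avoids three-circles entirely: set $\phi(t)=\log M(e^{t})$, note that $\phi$ is convex with $\phi(t)/t\to\infty$, use the supporting-line bound $\phi(ct)\ge\phi(t)+(c-1)t\,\phi'_{+}(t)$, and observe that $h(t):=t\phi'_{+}(t)-\phi(t)$ is nondecreasing and cannot remain $\le 0$ (otherwise $\phi(t)/t$ would be bounded), so that $t\phi'_{+}(t)\ge\phi(t)$ for all large $t$; this again yields $\phi(ct)\ge c\phi(t)$, i.e.\ $\log M(r^c)\ge c\log M(r)$.
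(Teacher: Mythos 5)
Your proof is correct. Note that the paper does not prove this lemma at all---it is quoted as Lemma 2.2 of Rippon and Stallard \cite{Smallg}---and your argument (Hadamard convexity of $\log M$ as a function of $\log r$, combined with $\log M(r)/\log r\to\infty$ because $f$ is transcendental) is essentially the standard proof from that reference; both of your variants, the three-circles rearrangement and the supporting-line bound with $t\phi_+'(t)\geq\phi(t)$ for large $t$, are sound, and your polynomial remark correctly identifies where transcendence is genuinely needed.
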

We also need the following lemma of Sixsmith \cite[Lemma 2.4]{DSix}.
\begin{lemma}
\label{lemdave}
Suppose that $f$ is a non-constant entire function and $g$ is a transcendental entire function. Then, given $\nu >1$, there exist $R_1, R_2 >0$ such that
\begin{equation}
\label{Dave1}
M(\nu r, f\circ g) \geq M(M(r,g),f) \geq M(r,f\circ g),\;\;\text{for}\;\;r\geq R_1
\end{equation}
and
\begin{equation}
\label{Dave2}
M(\nu r, g\circ f) \geq M(M(r,f),g) \geq M(r,g\circ f),\;\;\text{for}\;\;r\geq R_2.
\end{equation}
\end{lemma}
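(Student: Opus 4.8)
The right-hand inequalities in \eqref{Dave1} and \eqref{Dave2} are soft and require no restriction on $r$. If $z_1$ with $|z_1|=r$ realises $M(r,f\circ g)=|f(g(z_1))|$, then $|g(z_1)|\le M(r,g)$, and since $s\mapsto M(s,f)$ is increasing and $|f|$ on a closed disk attains its maximum on the boundary circle, $|f(g(z_1))|\le \max_{|w|\le M(r,g)}|f(w)|=M(M(r,g),f)$. Interchanging the roles of $f$ and $g$ gives $M(M(r,f),g)\ge M(r,g\circ f)$ in the same way.

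The content lies in the two left-hand inequalities, and both reduce to a single geometric statement: for a non-constant entire function $\Phi$ and $\nu>1$ there is $R_\Phi>0$ with $\{w:|w|=M(r,\Phi)\}\subseteq \Phi(\{z:|z|\le\nu r\})$ for all $r\ge R_\Phi$; call this property $(\star)$. Indeed, granting $(\star)$, choose $w^*$ with $|w^*|=M(r,g)$ realising $M(M(r,g),f)=|f(w^*)|$; by $(\star)$ applied to $\Phi=g$ there is $z$ with $|z|\le\nu r$ and $g(z)=w^*$, so by subharmonicity of $|f\circ g|$ we get $M(\nu r,f\circ g)\ge|f(g(z))|=M(M(r,g),f)$. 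Applying $(\star)$ to $\Phi=f$ (which is where $f$ need only be non-constant, precisely the generality of $(\star)$) yields \eqref{Dave2}.

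It remains to establish $(\star)$, and I would split into two cases. If $\Phi$ is a polynomial of degree $d\ge 1$ with leading coefficient $a_d$, then for large $r$ one has $\min_{|z|=\nu r}|\Phi(z)|=|a_d|(\nu r)^d(1+o(1))>|a_d|r^d(1+o(1))\ge M(r,\Phi)$ since $\nu^d>1$; so for each $w^*$ with $|w^*|=M(r,\Phi)$, Rouch\'{e}'s theorem shows $\Phi-w^*$ has $d\ge 1$ zeros in $\{|z|<\nu r\}$, giving $(\star)$. If $\Phi$ is transcendental, I would work at the point $z_0$, $|z_0|=r$, with $|\Phi(z_0)|=M(r,\Phi)=:M$. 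Writing $N(r)=\tfrac{d}{d\log r}\log M(r,\Phi)$, which tends to $\infty$ because Hadamard convexity of $\log M$ in $\log r$ forces $N(r)\to\infty$ for any non-polynomial $\Phi$, the tangential derivative of $\log|\Phi|$ vanishes at $z_0$ while the radial one equals $N(r)/r$; by the Cauchy--Riemann equations this gives $|\Phi'(z_0)|=N(r)M/r$. Rescaling, $\Psi(z):=\Phi(z_0+(\nu-1)rz)/M$ satisfies $|\Psi(0)|=1$ and $|\Psi'(0)|=(\nu-1)N(r)\to\infty$. A Wiman--Valiron-type estimate at the maximum-modulus point then shows that, for a fixed small $s$, the image curve $\Psi(\{|z|=s\})$ winds at least once around every point of the unit circle; equivalently $\Psi-w^*$ has a zero in $\{|z|<s\}$ for every $|w^*|=1$, which, after undoing the rescaling and noting $\{z_0+(\nu-1)rz:|z|\le 1\}\subseteq\{|z|\le\nu r\}$, is exactly $(\star)$ for $\Phi$.

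The main obstacle is this transcendental case: making rigorous, uniformly in all large $r$, that $\Phi$ winds enough near its maximum-modulus point to cover the whole circle $\{|w|=M(r,\Phi)\}$ within $\{|z|\le\nu r\}$. This is the quantitative heart of Wiman--Valiron theory, and the maximum-modulus point is "regular" so the usual exceptional set does not intrude; concretely one may apply Koebe's theorem to $\log\Psi$, which has derivative $(\nu-1)N(r)\to\infty$ at $0$ and vanishing real part there, so that its image contains an ever-longer vertical segment through $\log\Psi(0)$ which $\exp$ wraps around $\{|w|=1\}$ — provided one controls the univalence radius of $\log\Psi$ near $0$, i.e.\ the behaviour of $\Phi$ beyond first order at $z_0$. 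The polynomial case and the two soft inequalities are routine by comparison.
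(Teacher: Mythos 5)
First, note that the paper itself contains no proof of this lemma: it is quoted verbatim from Sixsmith \cite{DSix}, so your argument can only be judged on its own merits. The soft right-hand inequalities, the reduction of the two left-hand inequalities to your covering property $(\star)$, and the polynomial case of $(\star)$ are all correct. The genuine gap is exactly where you place it, and it is not a minor technicality: the transcendental case of $(\star)$ is the whole content of the lemma, and the route you sketch cannot deliver it as stated. The identity $|\Phi'(z_0)|=N(r)M(r,\Phi)/r$ already needs differentiability of $\log M$ at $r$ plus an envelope argument, but much more seriously, Wiman--Valiron-type control of $\Phi$ near a maximum-modulus point is valid only for $r$ outside an exceptional set of \emph{radii}; the exceptional set is a set of values of $r$, not of points on the circle $|z|=r$, so the assertion that ``the maximum-modulus point is regular so the usual exceptional set does not intrude'' is unjustified, while the lemma requires the conclusion for \emph{every} large $r$. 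Moreover, even granting $|\Phi'(z_0)|=N(r)M/r\to\infty$, a large first derivative at one point gives no winding or covering without control of the second-order behaviour of $\Phi$ at $z_0$ (your ``univalence radius of $\log\Psi$''), which is precisely what you concede is missing. So the quantitative heart of the proof is still open in your write-up.

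It is worth pointing out that the standard proof avoids this difficulty by proving less than your $(\star)$. Since $s\mapsto M(s,f)$ is increasing, you do not need $\Phi(\{|z|\le\nu r\})$ to contain the \emph{specific} circle $\{w:|w|=M(r,\Phi)\}$; it suffices that it contain \emph{some} circle centred at the origin of radius at least $M(r,\Phi)$, for then the maximum of the outer function on that circle already dominates $M(M(r,\Phi),\cdot)$. That weaker covering follows from a classical covering theorem of Bohr type (for fixed $\rho\in(0,1)$, the image of $\{|z|\le R\}$ contains a circle centred at $0$ of radius at least $c(\rho)\bigl(M(\rho R,\Phi)-|\Phi(0)|\bigr)$), applied with $R=\nu r$ and $\rho\in(1/\nu,1)$, combined with the fact that $M(sr,\Phi)/M(r,\Phi)\to\infty$ for every $s>1$ when $\Phi$ is transcendental; the case of a polynomial inner function in \eqref{Dave2} is then handled exactly by your minimum-modulus argument. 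This is essentially the route behind the result cited from \cite{DSix}. If you wish to keep the stronger statement $(\star)$, you would in any case have to derive it from such a covering theorem rather than from a pointwise expansion at the maximum-modulus point.
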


\begin{proof}[Proof of Theorem \ref{thcom}]
It is sufficient to prove the result for $j=2.$ Let $f_1$ be strongly log-regular, that is, for any $\varepsilon \in (0,1)$ there exist $R>0$ and $k>1$ such that
\begin{equation}
\label{f1}
(\log M(r^k,f_1))^{\varepsilon} \geq k\log M(r,f_1),\;\;\text{for}\;\;r\geq R, 
\end{equation}
and let $f_2$ be any transcendental entire function.\\

Given $\varepsilon' \in (0,1)$ we take $\varepsilon= \frac{2}{3} \varepsilon'.$ Then there exist $R>0$ and $k>1$ such that (\ref{f1}) holds with this $\varepsilon.$  Now take  $\nu =k^{1/2}$ and put $k'= \nu k= k^{3/2}$. Note that $\varepsilon'= \frac{3}{2} \varepsilon= \varepsilon(1+ \log \nu/\log k).$  Then we apply Lemma \ref{lemdave} with $f=f_2$ and $g=f_1,$ where $R_2$ is the constant in (\ref{Dave2}) and $R_0$ is the constant in (\ref{max-prop}) for $f=f_2.$ So, for $r \geq \max\{e,R_0, R_2\}$, we have 
\begin{eqnarray}
M(r^{k'},f_1\circ f_2) &\geq & M(\nu r^k, f_1\circ f_2) \nonumber \\
&\geq & M(M(r^k,f_2),f_1), \;\;\;\;\;\;\;\text{by (\ref{Dave2})} \nonumber \\
& \geq & M(M(r,f_2)^k,f_1),\;\;\;\;\;\;\;\text{by (\ref{max-prop})}.\nonumber
\end{eqnarray}

Hence, for $r \geq R'= \max\{e, R, R_0, R_2\},$
\begin{eqnarray}
(\log M(r^{k'},f_1\circ f_2))^{\varepsilon} &\geq & (\log M(M(r,f_2)^k,f_1))^{\varepsilon} \nonumber \\
&\geq & k\log M(M(r,f_2),f_1), \;\;\;\text{by (\ref{f1})} \nonumber \\
&\geq& k\log M(r,f_1\circ f_2).\nonumber 
\end{eqnarray}
Hence, for $r \geq R',$
\begin{eqnarray}
(\log M(r^{k'},f_1\circ f_2))^{\varepsilon'}&=& (\log M(r^{k'},f_1\circ f_2))^{(3/2)\varepsilon} \nonumber \\
& \geq &(k\log M(r,f_1\circ f_2))^{3/2} \nonumber \\
&=& k' (\log M(r,f_1\circ f_2))^{3/2} \nonumber \\
&\geq &  k' \log M(r,f_1\circ f_2), \nonumber
\end{eqnarray}
as required.
So $f_1 \circ f_2$ is strongly log-regular.
\end{proof}

 \section{Examples}
 In this section we construct two examples of functions with specific properties.\\
 
\begin{ex}
\label{th}

There exists a transcendental entire function of zero lower order and positive, finite order which is strongly log-regular.
\end{ex}

\begin{ex}
\label{ex2}
There exists a transcendental entire function $f$ which is log-regular such that $Q_2(f) \neq A(f).$ Hence, $Q(f)=A(f)$ but $Q_2(f) \neq Q(f).$
\end{ex} 
In order to construct these functions we use the following lemma (see \cite{Clunie}).\\
 \begin{lemma}
\label{C-K}
Let $\phi$ be a convex increasing function on $\mathbb{R}$ such that $\phi(t) \neq O(t)$ as $t \to \infty$. Then there exists a transcendental entire function $f$ such that 
$$\log M(e^t,f) \sim  \phi(t)\;\;\text{as}\;\;t \to \infty.$$
\end{lemma}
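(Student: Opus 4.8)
\emph{Proof proposal.} The plan is to realise $f$ as an explicit power series $f(z)=\sum_{n}a_{n}z^{n}$ with non-negative coefficients, chosen so that $\log M(r,f)$ can be read off from the exponents. The tool is the \emph{maximum term} $\mu(r,f)=\max_{n}a_{n}r^{n}$: writing $r=e^{t}$ and $a_{n}=e^{-c_{n}}$,
\[
\log\mu(e^{t},f)=\sup_{n}\,(nt-c_{n}),
\]
the Legendre transform of the sequence $(c_{n})$. Since $\log M(e^{t},f)$ is, by the Hadamard three--circles theorem, automatically convex and increasing in $t$, and is $\ne O(t)$ exactly when $f$ is transcendental, the lemma says that any such $\phi$ is asymptotically realisable, and the natural candidate is to take $c_{n}$ essentially equal to the conjugate function $\phi^{*}(s)=\sup_{t}(st-\phi(t))$, so that one expects $\log\mu(e^{t},f)\approx\phi^{**}(t)=\phi(t)$ by Fenchel--Moreau duality. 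The theorem then splits into
\[
\log\mu(r,f)\ \le\ \log M(r,f)\ \le\ \log\mu(r,f)+o\!\left(\log\mu(r,f)\right),\qquad \log\mu(e^{t},f)\sim\phi(t).
\]

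First I would set up the construction and clear the easy parts. As $\phi$ is convex and $\phi(t)\ne O(t)$, the right derivative $\phi'_{+}(t)\to\infty$, so $\phi^{*}(n)$ is finite for all large $n$, and from $\phi^{*}(n)\ge nT-\phi(T)$ for each fixed $T$ one gets $\phi^{*}(n)/n\to\infty$; hence $a_{n}^{1/n}=e^{-\phi^{*}(n)/n}\to0$, so $f$ is entire, and it is not a polynomial since its coefficients are eventually all positive. The bound $\log\mu(r,f)\le\log M(r,f)$ is just the Cauchy estimate $a_{n}r^{n}\le M(r,f)$. For the lower bound on $\log\mu$, fix $t$ large, let $x_{0}=\phi'(t)$ be a maximiser of $x\mapsto xt-\phi^{*}(x)$ (with value $\phi^{**}(t)=\phi(t)$), and put $n_{0}=\lfloor x_{0}\rfloor$; since $\phi^{*}$ is non-decreasing, $n_{0}t-\phi^{*}(n_{0})\ge\phi(t)-(x_{0}-n_{0})t\ge\phi(t)-t$, and $t=o(\phi(t))$. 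With $n_{0}t-\phi^{*}(n_{0})\le\phi(t)$ this gives $\log\mu(e^{t},f)\sim\phi(t)$.

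The substance is the upper bound $\log M(r,f)\le(1+o(1))\log\mu(r,f)$. I would write $M(r,f)\le\sum_{n}a_{n}r^{n}$ and use that, $\phi^{*}$ being convex, the terms $a_{n}r^{n}=e^{nt-\phi^{*}(n)}$ are log-concave and unimodal in $n$, whence $\sum_{n}a_{n}r^{n}\le N(r)\cdot O(\mu(r,f))$, with $N(r)$ the number of exponents within a bounded factor of the maximum term; so it suffices to arrange $\log N(r)=o(\phi(\log r))$. This is where the argument bites: the central index $\nu(r)\approx\phi'_{+}(\log r)$ can grow or jump very fast (for instance if $\phi$ is piecewise linear with rapidly increasing slopes), and then a power series containing every exponent would have many comparable terms near $\nu(r)$ and overshoot $\log M$ by a constant factor. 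The remedy is to use only a \emph{lacunary} set of exponents $\lambda_{0}<\lambda_{1}<\cdots$, placed near the ``corners'' of $\phi^{*}$ (equivalently near the slope-values of $\phi$), with $c_{\lambda_{k}}=\phi^{*}(\lambda_{k})$: these corners are widely separated precisely in the fast-growing regime, so that at every level only $O(1)$ exponents are comparable, while for slowly convex $\phi$ the full series $a_{n}=e^{-\phi^{*}(n)}$ already has $\log N(r)=o(\phi)$ and works directly.

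I expect the main obstacle to be this adaptive choice of exponents and the check that the restricted transform is still asymptotic to $\phi$. One needs the $\lambda_{k}$ sparse enough to control $N(r)$ yet dense enough that $\sup_{k}(\lambda_{k}t-\phi^{*}(\lambda_{k}))\sim\phi(t)$ --- equivalently that the piecewise-linear minorant of $\phi$ formed from its tangent lines at the points $\tau_{k}=(\phi')^{-1}(\lambda_{k})$ is still $\sim\phi$; quantitatively, on $[\tau_{k},\tau_{k+1}]$ this minorant lies below $\phi$ by at most the local ``curvature defect'' of $\phi$, and the selection must be made (with a small case distinction, according to whether the consecutive slope-values of a convex polygonal approximant of $\phi$ can be taken at least $1$ apart while keeping the approximant $\sim\phi$) so that this defect, and the rounding of the slope-values to integers, cost only $o(\phi)$. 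Granting all this, the two displayed estimates hold and $\log M(e^{t},f)\sim\phi(t)$ follows.
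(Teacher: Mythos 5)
This lemma is not proved in the paper at all: it is the Clunie--K\"ovari theorem, quoted verbatim from \cite{Clunie} and used as a black box, so there is no internal argument to compare yours against; what you are really attempting is a proof of the cited result itself. Your overall strategy is the standard one for such results (and close in spirit to Clunie--K\"ovari's own): realise $f$ as a power series with coefficients $a_n=e^{-c_n}$, $c_n\approx\phi^*(n)$, identify $\log\mu(e^t,f)$ with the Legendre transform $\phi^{**}(t)=\phi(t)$, and then show $\log M\le(1+o(1))\log\mu$. The easy halves of your argument are sound: entirety from $\phi^*(n)/n\to\infty$, the Cauchy bound $\mu\le M$, and the lower bound $\log\mu(e^t,f)\ge\phi(t)-t$ with $t=o(\phi(t))$ (convexity plus $\phi(t)\neq O(t)$ gives $\phi(t)/t\to\infty$) are all correct.

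The genuine gap is exactly where you flag it, and flagging it does not close it. You correctly observe that the full series $a_n=e^{-\phi^*(n)}$ can fail ($\phi^*$ may be linear with slope $t$ on an enormous range of integers, so at $r=e^t$ the number $N(r)$ of terms comparable to the maximum term can be so large that $\log N(r)$ swamps $\phi(t)$), and you propose repairing this with a lacunary set of exponents $\lambda_k$ placed at the ``corners'' of $\phi^*$. But the proof of the lemma \emph{is} precisely the construction of such a sequence together with the two verifications you defer: (i) that the restricted supremum $\sup_k(\lambda_k t-\phi^*(\lambda_k))$ is still $\sim\phi(t)$, i.e.\ that the tangent-line minorant through the selected slope values, with slopes rounded to integers, loses only $o(\phi)$; and (ii) that with this selection the tail sum exceeds the maximum term by a factor $e^{o(\phi(\log r))}$ uniformly in $r$, which requires a quantitative sparseness condition on the $\lambda_k$, not just the heuristic that corners are ``widely separated in the fast-growing regime'' (for a general convex $\phi$ the two demands pull in opposite directions, and one must exhibit a single sequence meeting both). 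As written, the argument ends with ``granting all this,'' so the decisive step --- the adaptive choice of $(\lambda_k)$ and the uniform comparison of $\log M$ with the restricted $\log\mu$ --- is asserted rather than proved. If you want a complete argument you should either carry out this selection explicitly (in effect reproving \cite{Clunie}) or, as the paper does, simply cite Clunie and K\"ovari.
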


 We showed in Section \ref{strong-log} that all transcendental entire functions of finite order and positive lower order are strongly log-regular. However, a strongly log-regular function of finite order does not need to have positive lower order. Indeed, Example \ref{th} gives a function of zero lower order and positive, finite order which is strongly log-regular.\\

\begin{proof}[Proof of Example \ref{th}] We first take a fixed value of $\varepsilon$, say $\tilde{\varepsilon} \in (0,1),$ and a fixed value of $k$, say $\tilde{k}$, such that $\tilde{k}>2/\tilde{\varepsilon} \geq \frac{2\log (\tilde{k}+1)}{\log \tilde{k}}$ and construct a convex increasing function $\phi$ on $\mathbb{R}$ such that:\\
\begin{itemize}
 \item[a)] $\displaystyle  \liminf_{t \to \infty} \frac{\log \phi (t)}{t}= 0;$

\item[b)] $\displaystyle 1 \leq \limsup_{t \to \infty} \frac{\log \phi (t)}{t} \leq \tilde{k};$

\item[c)] there exists $T>0$ such that, for $t>T,$
\begin{equation}
\label{1.1}
\phi (\tilde{k}t) \geq (\tilde{k} \phi (t))^{1/\tilde{\varepsilon}}. 
\end{equation}
\end{itemize}

Once this is done, we show that this function $\phi$ satisfies (\ref{1.1}) for \textit{any} $\varepsilon \in(0,1)$ with a suitable $k>1$.\\

Take $\tilde{d}= 1/\tilde{\varepsilon}.$ Then $\tilde{k}>\tilde{d}>1.$ Take $a_0=1$, and choose $t_0$ so large that 
\begin{equation}
\label{deft0}
\frac{\log \tilde{k}}{t_0}< \frac{1}{2}
\end{equation}
and
\begin{equation}
\label{deft02}
\tilde{k}^{\tilde{d}}e^{\tilde{d}t} \leq e^{\tilde{k}t},\;\;\text{for}\;\;t \geq t_0.
\end{equation}

 Then we set $t_n= \tilde{k}^nt_0,$ for $n \in \mathbb{N},$ and define 
\begin{equation}
\label{an1}
a_n= e^{t_n},\;\;n=N_1,N_2,...,N_m,...,
\end{equation}
 where $(N_m)$ is an increasing sequence, to be chosen shortly, and
 \begin{equation}
 \label{an2}
a_n= (\tilde{k}a_{n-1})^{\tilde{d}},\;\;\text{elsewhere}.
\end{equation}

 We will show that,  for each $m \in \mathbb{N},$ we can choose $N_m$ so that
\begin{equation}
\label{Nm}
 \frac{\log a_{N_m-1}}{t_{N_m-1}}< \frac{1}{2^m}
 \end{equation}
 and
 \begin{equation}
 \label{3-2}
 e^{t_{N_m}} \geq (\tilde{k} a_{N_m-1})^{\tilde{d}}.
 \end{equation}

Then we let $\phi$ be the real function that is linear on each of the intervals $[t_n,t_{n+1}]$ with $\phi (t_n)= a_n$, for $n \in \mathbb{N}.$\\
 
Suppose there is no $N_1 \in \mathbb{N}$ which satisfies (\ref{Nm}) with $m=1$. Then $a_n= (\tilde{k}a_{n-1})^{\tilde{d}}$ for all $n \in \mathbb{N}.$ Hence,
 \begin{eqnarray}
  \frac{\log a_n}{t_n} &=& \frac{\log (\tilde{k}a_{n-1})^{\tilde{d}}}{t_n}\nonumber \\
  &=& \frac{\tilde{d}}{\tilde{k}} \left(\frac{\log \tilde{k}}{t_{n-1}}+ \frac{\log a_{n-1}}{t_{n-1}}\right). 
 \end{eqnarray}
Now let $x_n= \frac{\log a_n}{t_n}, c= \tilde{d}/\tilde{k} <1$ and $\varepsilon_{n-1}= \frac{\log \tilde{k}}{t_{n-1}}$. We have that
$$x_n= c( \varepsilon_{n-1}+ x_{n-1}),\;\;\text{for all}\;\;n \in \mathbb{N},$$ so
\begin{eqnarray}
\limsup_{n \to \infty} x_n &\leq & c\limsup_{n \to \infty} \varepsilon_{n-1}+ c\limsup_{n \to \infty} x_{n-1} \nonumber \\
&= & c\limsup_{n \to \infty} x_{n-1}, \nonumber
\end{eqnarray}
as $\varepsilon_{n-1} \to 0$ as $n \to \infty$ and $c<1.$ Hence $$\limsup_{n \to \infty}  \frac{\log a_n}{t_n}=0$$ and so we obtain a contradiction. Therefore, (\ref{Nm}) is true for some $N_1 \in \mathbb{N}.$\\

Suppose now that (\ref{Nm}) is true for $N_1,N_2,...,N_m \in \mathbb{N}$ but it fails to be true for all $n> N_m.$ Following the above argument, we again obtain a contradiction and so there exists $N_{m+1} \in \mathbb{N}$ which satisfies (\ref{Nm}).\\

It follows from (\ref{deft0}),(\ref{Nm}) and the fact that $\tilde{k}>2\tilde{d}>1$ that
$$\frac{\log (\tilde{k}a_{N_m-1})^{\tilde{d}}}{t_{N_m}}= \frac{\tilde{d}}{\tilde{k}} \left(\frac{\log \tilde{k}}{t_{N_m-1}}+ \frac{\log a_{N_m-1}}{t_{N_m-1}}\right) \leq \frac{\log a_{N_m-1}}{t_{N_m-1}} < \frac{1}{2^m}<1,$$
and so $e^{t_{N_m}} > (\tilde{k}a_{N_m-1})^{\tilde{d}}$, which means that (\ref{Nm}) implies (\ref{3-2}).\\

 In order to prove a) we note that it follows from (\ref{Nm}) that $$\frac{\log \phi(t_{N_m-1})}{t_{N_m-1}}= \frac{\log a_{N_m-1}}{t_{N_m-1}}< \frac{1}{2^m},\;\;\text{for}\;\;m \in \mathbb{N},$$
and so $$\liminf_{t \to \infty} \frac{\log \phi(t)}{t} \leq \liminf_{m \to \infty} \frac{1}{2^m} =0.$$

We also note that it follows from (\ref{an1}) that $$\frac{\log \phi(t_{N_m})}{t_{N_m}}= \frac{\log a_{N_m}}{t_{N_m}}=1,\;\;\text{for}\;\;m \in \mathbb{N},$$ and so, in order to prove b), it remains to show that $$\limsup_{t \to \infty} \frac{\log \phi (t)}{t} \leq \tilde{k}.$$
It suffices to show that $\phi(t) \leq e^{\tilde{k}t}$ for large values of $t$. We will first show that $\phi(t_n) \leq e^{t_n},$ for $n$ large enough.\\

Suppose that $\phi(t_n) \leq e^{t_n}$ for some $n.$ Then either 
$$\phi(t_{n+1}) = e^{t_{n+1}}$$
or 
\begin{equation}
\label{neworder}
\phi(t_{n+1}) = (\tilde{k}\phi(t_n))^{\tilde{d}} \leq (\tilde{k}e^{t_n})^{\tilde{d}}
 \leq  e^{\tilde{k}t_n}=e^{t_{n+1}},
\end{equation}
by (\ref{deft02}). In either case, we deduce that $\phi(t_n) \leq e^{t_n}$ implies that $\phi(t_{n+1}) \leq e^{t_{n+1}}$. Since $\phi(t_{N_m})= e^{t_{N_m}}$, for all $m \in \mathbb{N},$  we conclude that $\phi(t_n) \leq e^{t_n}$, for $t_n \geq t_{N_1}.$ Now take any $t \in [t_n,t_{n+1}], n \geq N_1.$ Then

$$\phi(t) \leq \phi(t_{n+1}) \leq e^{t_{n+1}}= e^{\tilde{k}t_n} \leq e^{\tilde{k}t},$$
and the result follows.\\

We now show that (\ref{1.1}) is true for $t \geq t_0.$ In order to do so, we  consider the functions $g(t)= \phi (\tilde{k}t)$ and $h(t)= (\tilde{k} \phi(t))^{\tilde{d}}$. For each $n\geq 0$, $g$ is a linear, increasing function on $[t_n,t_{n+1}]= [t_n, \tilde{k} t_n]$ and $h$ is convex on $[t_n,t_{n+1}]$. We will find the values of the two functions $g$ and $h$ at the endpoints of each interval and we will use the fact that the graph of a convex function which has the same or smaller values at the endpoints than a linear function is always below the graph of the linear function. Thus, to show that $h(t)= (\tilde{k} \phi(t))^{\tilde{d}} \leq \phi(\tilde{k} t)= g(t)$ for all $t \geq t_0,$ it is sufficient to show that
$$(\tilde{k} \phi(t_n))^{\tilde{d}} \leq \phi(t_{n+1}),\;\;\text{for}\;\;n \geq 0.$$
This is evidently true if (\ref{an2}) holds and if (\ref{an1}) holds it is true by (\ref{3-2}).\\

Finally, we need to show that $\phi$ is convex. It suffices to show that the sequence of gradients $g_n= \frac{a_n-a_{n-1}}{t_n-t_{n-1}}$, $n \in \mathbb{N},$ of the line segments in the graph of $\phi$ is increasing, or equivalently that, for $n \in \mathbb{N}$,
\begin{equation}
\label{convex}
\frac{a_{n+1}-a_n}{t_{n+1}-t_n} \geq \frac{a_n-a_{n-1}}{t_n-t_{n-1}}.
\end{equation}
Since $t_n=\tilde{k}^nt_0,$ for $n \in \mathbb{N},$ we need to show that
$$a_{n+1} \geq (\tilde{k}+1)a_n -\tilde{k}a_{n-1}.$$ 
But $$a_{n+1}+ \tilde{k}a_{n-1} \geq (\tilde{k}+1)a_n$$ since, by (\ref{3-2}) and the fact that $\tilde{d}= \frac{1}{\tilde{\varepsilon}} \geq \frac{\log (\tilde{k}+1)}{\log \tilde{k}}$,
$$a_{n+1} \geq (\tilde{k}a_n)^{\tilde{d}} \geq \tilde{k}^{\tilde{d}} a_n \geq (\tilde{k}+1) a_n,\;\;\text{for}\;\;n \in \mathbb{N},$$ and the result follows.\\

We have constructed a function $\phi$ such that (\ref{1.1}) holds for $\tilde{\varepsilon}$ which is a specific value of $\varepsilon \in (0,1)$. In fact for any other $\varepsilon \in (0,\tilde{\varepsilon})$ we can find a  large enough $k>1$ such that (\ref{1.1}) holds for the same function $\phi.$ Indeed, suppose first that (\ref{1.1}) holds for $\varepsilon= \tilde{\varepsilon}$ and set $\tilde{d}= 1/\tilde{\varepsilon},$ as before.\\

Now take $\varepsilon \in (0, \tilde{\varepsilon})$ and suppose that $1/\varepsilon= d= {\tilde{d}}^p,$ for some $n\leq p<n+1, n \in \mathbb{N}$. It follows from (\ref{1.1}) that, for $t \geq t_0,$ 
\begin{eqnarray}
\phi(\tilde{k}^{2n+2}t) &\geq & (\tilde{k}\phi(\tilde{k}^{2n+1}t))^{\tilde{d}} = \tilde{k}^{\tilde{d}} \phi({\tilde{k}}^{2n+1}t)^{\tilde{d}} \nonumber \\
\label{lasteq}
&\geq & {\tilde{k}}^{\tilde{d}} \tilde{k}^{{\tilde{d}}^2} \phi (\tilde{k}^{2n}t)^{\tilde{d}^2} \nonumber \\
& \geq & \tilde{k}^{\tilde{d}+{\tilde{d}}^2+ \cdots +{\tilde{d}}^{2n+2}} \phi(t)^{\tilde{d}^{2n+2}}.
\end{eqnarray}

We now show that 
\begin{equation}
\label{indep}
\tilde{k}^{\tilde{d}+{\tilde{d}}^2+\cdots +{\tilde{d}}^{2n+2}} \phi(t)^{{\tilde{d}}^{2n+2}} \geq ({\tilde{k}}^{2n+2} \phi(t))^{\tilde{d}^p}.
\end{equation}
As $p<n+1$, it suffices to show that
\begin{equation}
\label{indep2}
\tilde{d}+\tilde{d}^2+\cdots +\tilde{d}^{2n+2} \geq (2n+2){\tilde{d}}^{n+1}.
\end{equation}\\

We will prove (\ref{indep2}) using the inequality of arithmetic and geometric means, which implies that
\begin{eqnarray}
\frac{\tilde{d}+{\tilde{d}}^2+ \cdots +{\tilde{d}}^{2n+2}}{2n+2} &\geq& \sqrt[2n+2]{\tilde{d}  {\tilde{d}}^2\cdots {\tilde{d}}^{2n+2}} \nonumber \\
&=& \sqrt[2n+2]{{\tilde{d}}^{(2n+2)(2n+3)/2}} \nonumber \\
&=& \tilde{d}^{(2n+3)/2} ={\tilde{d}}^{n+ 3/2} > \tilde{d}^{n+1}, \nonumber
\end{eqnarray}

 as required. Combining (\ref{lasteq}) and (\ref{indep}) gives 
 $$\phi(kt) \geq (k\phi(t))^{\varepsilon},\;\;\text{for}\;\;t \geq t_0,$$
 where $k= \tilde{k}^{2n+2}.$ Thus (\ref{1.1}) holds for any $\varepsilon \in (0, \tilde{\varepsilon})$ and hence for any $\varepsilon \in (0,1)$.\\

Now we can apply Lemma \ref{C-K} to $\phi$ to give a transcendental entire function $f$ such that
\begin{equation}
\label{tef}
\log M(e^t,f)= \phi(t)(1+ \delta(t)),
\end{equation}
 where $\delta(t) \to 0$ as $t \to \infty.$ Then
 $$\frac{\log \log M(e^t,f)}{t}= \frac{\log \phi(t)}{t}+ \frac{O(\delta(t))}{t}$$
 and so $\lambda(f)=0$ and $1 \leq \rho(f) \leq \tilde{k}$, by properties (a) and (b) respectively.

 It remains to show that $f$ satisfies (\ref{nrc}). We know that  for any $\varepsilon \in (0,1)$ there exists $k> d= 1/\varepsilon$ such that
 \begin{equation}
 \label{slogex} 
 \phi (kt) \geq (k\phi(t))^d,\;\;\text{for}\;\;t \geq t_0.
 \end{equation}
 Let $0<\varepsilon'< \varepsilon$ and set $d'= 1/\varepsilon'.$ Then, by (\ref{tef}) and (\ref{slogex}), 
$$\log M(e^{kt},f) \geq \frac{1+ \delta(kt)}{(1+ \delta(t))^{d'}} (k\log M(e^t,f))^{d'},\;\;\text{for}\;\;t \geq t_0.$$
Since $$\frac{1+ \delta(kt)}{(1+ \delta(t))^{d'}} \to 1\;\;\text{as}\;\;t \to \infty,$$
we deduce that 
$$\log M(e^{kt},f) \geq (k\log M(e^t,f))^{1/\varepsilon},$$
for large $t$ and so $f$ satisfies (\ref{nrc}) for sufficiently large $r$ with $r= e^t$. 
\end{proof}

Throughout the paper, we are interested in sufficient conditions for $Q_2(f)=~ A(f)$. However, these two sets are not always equal. We now construct a function for which $Q_2(f)$ is not equal to $A(f)$. \\

\begin{proof}[Proof of Example \ref{ex2}] We construct a transcendental entire function $f$ which is log-regular and hence, by \cite[Theorem 4.1]{Regul}, $Q(f)=A(f),$ but for which $Q_2(f)\neq~ A(f).$ Obviously, this function cannot be strongly log-regular. In order to construct such a function we will again use the result of Clunie and K\"{o}vari (see Lemma \ref{C-K}). The idea is to find a real, increasing, convex function $\phi$  such that:\\
\begin{itemize}
\item[$\bullet$] there exist $ k>1$ and $d>1$ such that
\begin{equation}
\label{sq1.1}
\phi(kt) \geq kd \phi(t),\;\;\text{for large}\;\;t,
\end{equation}
and\\

\item[$\bullet$] if $f$ is produced from $\phi$ using Lemma \ref{C-K} then the iterates of the function $\mu_{2,\varepsilon}(r),$ for $\varepsilon \in (1/2,1)$, grow much more slowly than the iterates of $M(r)= M(r,f).$\\
\end{itemize}
Let $\phi(t)= t^2, t>0.$ Then $\phi$ is increasing and convex. Let $k>1$ and $1<d<k.$ Then 
$$\phi(kt)= k^2t^2 > kd t^2  = kd \phi(t)$$ and so (\ref{sq1.1}) is satisfied.\\

Now we apply Lemma \ref{C-K} to $\phi$ to give a transcendental entire function $f$ such that
\begin{equation}
\label{sq1.2}
\log M(e^t,f)= \phi(t)(1+ \delta(t))= t^2(1+ \delta(t)),
\end{equation}
 where $\delta(t) \to 0$ as $t \to \infty.$
 
 Now (\ref{sq1.1}) implies that 
$$\log M(e^{kt},f) \geq kd \frac{1+ \delta(kt)}{1+ \delta(t)} \log M(e^t,f),$$
where $$\frac{1+ \delta(kt)}{1+ \delta(t)} \to 1\;\; \text{as}\;\; t \to \infty.$$
Hence, there exists $1<d'<d$  such that 
$$\log M(e^{kt},f) \geq kd' \log M(e^t,f),$$
for large $t$, and so, by (\ref{logreg2}), $f$ is log-regular  which implies that $Q(f)=A(f).$\\

Now we show that, for $\varepsilon \in (1/2,1)$, the iterates of $\mu_{2,\varepsilon}(r)$ grow  more slowly than the iterates of $M(r).$\\

By (\ref{sq1.2}), we have 
\begin{equation}
\label{2ex}
M(r)=\exp ((\log r)^2(1+ \nu(r))),
\end{equation}
where $\nu(r)= \delta (\log r) \to 0,$ as $r \to \infty,$ and 
\begin{equation}
\label{ex2.2}
\mu_{2,\varepsilon}(r)= \exp ((\log M(r))^{\varepsilon})= \exp (((\log r)^2(1+ \nu(r)))^{\varepsilon})= \exp ((\log r)^{2\varepsilon}(1+ \nu(r))^{\varepsilon}).
\end{equation}
 
Now fix $\varepsilon \in (1/2,1).$ It then follows from (\ref{ex2.2}) that there exists $R>0$ such that we have $\mu_{2,\varepsilon}(r)>r,$ for $r\geq R$ and so $\mu_{2,\varepsilon}^n(R) >R$, for $n \in \mathbb{N}.$ \\

 The idea is to show that, for any $m \in \mathbb{N},$ there exists $N \in \mathbb{N}$ such that, for any $n \in \mathbb{N}$ with $n>N$, we have
 \begin{equation}
 \label{sq1.3}
 \mu_{2,\varepsilon}^{m+n}(R_0)< M^n(R_0),
 \end{equation}
for some $R_0\geq R.$ We then show that this implies that $Q_2(f) \neq A(f).$
 
Since $\varepsilon \in (1/2,1)$, it follows from  (\ref{2ex}) and (\ref{ex2.2}) that there exist $R_1, R_2>0$ and $c, \tilde{c} \in \mathbb{R}$ such that 
\begin{equation}
\label{sq1.11}
1<2\varepsilon <\tilde{c}<c<2,
\end{equation}
\begin{equation}
\label{sq1.12}
M(r) \geq \exp((\log r)^c),\;\;\text{for}\;\;r>R_1,
\end{equation}
and
\begin{equation}
\label{sq1.13}
\mu_{2,\varepsilon}(r) \leq \exp ((\log r)^{\tilde{c}}),\;\;\text{for}\;\;r>R_2.
\end{equation}
 Hence, by (\ref{sq1.12}) and (\ref{sq1.13}), we obtain, for $n \in \mathbb{N},$
$$M^n(r) \geq \exp((\log r)^{c^n}),\;\;\text{for}\;\;r>R_1,$$ and
$$\mu_{2,\varepsilon}^n(r) \leq \exp((\log r)^{{\tilde{c}}^n}),\;\;\text{for}\;\;r>R_2.$$
By (\ref{sq1.11}), we can easily see that, for any $m \in \mathbb{N},$ there exists $N \in \mathbb{N}$ such that, for any $n \in \mathbb{N}$ with $n>N$, we have
$${\tilde{c}}^{n+m} < c^n,$$
and hence
$$\mu_{2,\varepsilon}^{n+m}(r) \leq \exp((\log r)^{{\tilde{c}}^{n+m}})<\exp((\log r)^{c^n}) \leq M^n(r),\;\;\text{for}\;\;r>R_0,$$
where $R_0= \max\{R, R_1, R_2\}.$ Therefore, (\ref{sq1.3}) is satisfied for $R_0= \max\{R, R_1, R_2\}.$
We will now show that (\ref{sq1.3}) implies that $Q_2(f)\setminus A(f)$ is non-empty. For this purpose we will use the following theorem of Rippon and Stallard (see \cite[Theorem~ 3.1]{Regul}).
\begin{theorem}
\label{rs}
Let $f$ be a transcendental entire function. There exists $R=~ R(f)>~0$ with the property that whenever $(a_n)$ is a positive sequence such that 
$$a_n \geq R \;\;\text{and}\;\;a_{n+1} \leq M(a_n),\;\;\text{for}\;\; n\in \mathbb{N},$$

there exists a point $\zeta \in \mathbb{C}$ and a sequence $(n_j)$ with $n_j \to \infty$ such that 
$$\lvert f^n(\zeta) \rvert \geq a_n, \;\;\text{for}\;\;n \in \mathbb{N},\;\;\text{but}\;\;\lvert f^{n_j}(\zeta) \rvert \leq M^2(a_{n_j}),\;\;\text{for}\;\;j \in \mathbb{N}.$$

\end{theorem}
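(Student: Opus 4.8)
The plan is to construct $\zeta$ by a nested-preimage argument that prescribes, up to a bounded factor, the moduli of the iterates of $\zeta$ along a sequence of radii $(r_n)$: arranging $r_n\ge a_n$ for all $n$ will give $|f^n(\zeta)|\ge a_n$, while keeping each $r_n$ as small as the construction permits will force $r_n\le M(a_n)$, and hence $|f^n(\zeta)|\le M^2(a_n)$, along a subsequence. The one external ingredient is a standard annular covering lemma for transcendental entire functions (see, e.g., \cite{Fast} and \cite{Smallg}): there exist $R_0>0$ and a function $\psi$ with $\psi(r)\le\sqrt r$ for $r\ge R_0$ such that
$$f\bigl(\{z:r\le|z|\le 2r\}\bigr)\ \supseteq\ \bigl\{w:\psi(r)\le|w|\le M(3r/2)\bigr\}\qquad(r\ge R_0).$$
I would combine this with the elementary principle that if $(E_n)_{n\ge 0}$ are non-empty compact sets with $f(E_n)\supseteq E_{n+1}$ for every $n$, then there is a point $\zeta$ with $f^n(\zeta)\in E_n$ for all $n$: setting $K_n=E_0\cap f^{-1}(E_1)\cap\cdots\cap f^{-n}(E_n)$, a descending induction using $f(E_k)\supseteq E_{k+1}$ shows each $K_n$ is non-empty, while the $K_n$ are compact and decreasing, so $\bigcap_n K_n\neq\emptyset$ and any point of it works.

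Fix $R\ge R_0$ so large (depending only on $f$) that $M(r)>r$, $2\sqrt r\le M(r)$, $M(3r/2)\ge 2M(r)$ and $2M(r)\le M^2(r)$ all hold for $r\ge R$; this is possible because for a transcendental entire function $M(r)/r\to\infty$ and $M(cr)/M(r)\to\infty$ for each fixed $c>1$ (otherwise $M$ would be dominated by a power of $r$, forcing $f$ to be a polynomial). Given a sequence $(a_n)$ as in the statement, so $a_n\ge R$ and $a_{n+1}\le M(a_n)$, put $r_0=a_0$ and, recursively, $r_{n+1}=\max\{a_{n+1},\psi(r_n)\}$, the smallest radius the construction allows. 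Then $r_n\ge a_n\ge R\ge R_0$ for every $n$, and $2r_{n+1}\le M(3r_n/2)$, since $2\psi(r_n)\le 2\sqrt{r_n}\le M(r_n)\le M(3r_n/2)$ and $2a_{n+1}\le 2M(a_n)\le M(3a_n/2)\le M(3r_n/2)$. Hence, taking $E_n=\{w:r_n\le|w|\le 2r_n\}$, a non-empty compact (and connected) set, the covering lemma gives $f(E_n)\supseteq\{w:\psi(r_n)\le|w|\le M(3r_n/2)\}\supseteq E_{n+1}$, so the principle above produces $\zeta$ with $r_n\le|f^n(\zeta)|\le 2r_n$ for all $n$. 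In particular $|f^n(\zeta)|\ge a_n$ for every $n$, which is the first assertion.

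It remains to find $n_j\to\infty$ with $|f^{n_j}(\zeta)|\le M^2(a_{n_j})$. Since $|f^{n_j}(\zeta)|\le 2r_{n_j}$ and $2M(a)\le M^2(a)$ for $a\ge R$, it is enough to have $r_{n_j}\le M(a_{n_j})$. Suppose no such subsequence exists: then $r_n>M(a_n)$, hence $r_n>a_n$, for all $n\ge N$ and some $N$. For $n\ge N$ the maximum defining $r_{n+1}$ cannot be attained at $a_{n+1}$, because $r_{n+1}>M(a_{n+1})>a_{n+1}$; therefore $r_{n+1}=\psi(r_n)\le\sqrt{r_n}$, and by induction $r_{N+k}\le r_N^{1/2^k}$ for all $k\ge 0$. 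But $r_N^{1/2^k}\to 1<R$ as $k\to\infty$, whereas $r_{N+k}\ge a_{N+k}\ge R$ --- a contradiction. Hence the required subsequence exists, and the theorem follows.

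The one substantial step is the covering lemma: that for an arbitrary transcendental entire function the image of a dyadic annulus $\{r\le|z|\le 2r\}$ contains a genuine round annulus reaching essentially up to the maximal modulus, with a lower radius $\psi(r)<r$ (which may even be taken as small as $\sqrt r$). This is exactly where the transcendence of $f$ enters decisively, via the superpolynomial growth of $M$ together with a winding-number argument and Wiman--Valiron type estimates near the maximum modulus. Granting it, everything else --- the nested-compact-sets principle, the greedy choice of the radii $r_n$, and the dichotomy yielding the subsequence --- is routine, all numerical constants being absorbed once and for all into the choice of a sufficiently large $R=R(f)$.
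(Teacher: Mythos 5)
First, note that the paper does not prove this statement at all: it is quoted directly from Rippon and Stallard \cite[Theorem~3.1]{Regul}. So the only question is whether your argument stands on its own, and it does not. The ``one external ingredient'', your annular covering lemma, is not a standard result and is false for general transcendental entire functions. The inclusion $f(\{z: r\le|z|\le 2r\})\supseteq\{w:\psi(r)\le|w|\le M(3r/2)\}$ with $\psi(r)\le\sqrt r$ cannot hold for all large $r$, because the image of the closed annulus contains no point of modulus smaller than $\min_{r\le|z|\le 2r}|f(z)|$, and for many entire functions this minimum is huge on unboundedly many such annuli. For example, take $f(z)=\prod_{k\ge1}(1+z/a_k)$ with $a_k=e^{e^k}$ and $r=a_m^{3/2}$: the annulus $\{r\le|z|\le 2r\}$ then lies deep inside the zero-free gap $(a_m,a_{m+1})$, and a short estimate of the product shows $|f(z)|\ge M(|z|)^{1/2}\gg\sqrt r$ throughout that annulus once $m$ is large (the familiar $\cos\pi\rho$ phenomenon for functions of small order). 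For such radii the set $\{w:|w|=\psi(r)\}$ is simply not in the image, so your inclusion $E_{n+1}\subseteq f(E_n)$ breaks down. The covering results that genuinely are available in this area (for instance in \cite{Fast}) push moduli \emph{up} by roughly $M$ (images of annuli cover annuli whose inner radius is comparable with $M$ of the original inner radius); none of them allows you to map back down to modulus $\sqrt r$, and no lemma of that kind can exist, by the minimum-modulus obstruction above.

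This is not a repairable technicality, because your entire mechanism for producing the subsequence $(n_j)$ --- the greedy choice $r_{n+1}=\max\{a_{n+1},\psi(r_n)\}$ together with the dichotomy ``otherwise $r_{n+1}=\psi(r_n)\le\sqrt{r_n}$ decays below $R$'' --- rests precisely on being able to force the orbit's modulus back down whenever $a_{n+1}$ is small. That is exactly what cannot be done for an arbitrary entire function, and it is the real difficulty that the Rippon--Stallard proof has to negotiate: their construction only prevents the orbit from running ahead of the iterates of $M$, rather than pulling it down, which is why the conclusion is the weaker bound $|f^{n_j}(\zeta)|\le M^2(a_{n_j})$ along a subsequence. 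The surrounding bookkeeping in your write-up (the nested compact sets principle, the inequalities $2\sqrt r\le M(r)$, $2M(a)\le M(3a/2)$ and $2M(a)\le M^2(a)$ for large $a$, and the logic of the dichotomy) is fine, but the proof collapses at its load-bearing step, so as it stands it does not establish the theorem.
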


Now, by Theorem \ref{rs}, with $a_n= \mu_{2,\varepsilon}^n(R), n \in \mathbb{N},$ there exists a point $\zeta$ and a sequence $(n_j)$ with $(n_j) \to \infty$ as $j \to \infty $, such that, for our function $f$,
\begin{equation}
\label{sq1.9}
\lvert f^n(\zeta) \rvert \geq \mu_{2,\varepsilon}^n(R),\;\;\text{for}\;\;n \in \mathbb{N},
\end{equation}
and
\begin{equation}
\label{sq1.10}
\lvert f^{n_j}(\zeta) \rvert \leq M^2(\mu_{2,\varepsilon}^{n_j}(R)), \;\;\text{for}\;\;j \in \mathbb{N}.
\end{equation}
It follows from (\ref{sq1.9}) that $\zeta \in Q_2(f).$ Also, (\ref{sq1.3}) and (\ref{sq1.10}) together imply that, for each $m \in \mathbb{N}$ and $n_j>m,$ we have
\begin{eqnarray}
\lvert f^{(n_j-m +2)+m -2}(\zeta) \rvert &=& \lvert f^{n_j}(\zeta) \rvert \nonumber \\
& \leq & M^2(\mu_{2,\varepsilon}^{n_j}(R))\nonumber \\
&<& M^2(M^{n_j-m}(R))\nonumber \\
&=&  M^{n_j-m+2}(R).\nonumber
\end{eqnarray}
 Hence, $\zeta \notin A(f),$ so $Q_2(f) \neq A(f)$, as required.\end{proof}

\end{document}